\theoremstyle{plain}
\newtheorem{corollary}{Corollary}
\newtheorem{definition}{Definition}
\newtheorem{proposition}{Proposition}
\numberwithin{equation}{section}
\begin{document}
\title[Norm Estimates]{Norm Estimates for Solutions of Elliptic BVPs of the
Dirac Operator}
\author{Dejenie A. Lakew}
\address{John Tyler Community College\\
Department of Mathematics}
\email{dlakew@jtcc.edu}
\urladdr{http://www.jtcc.edu}
\date{December 30, 2013}
\subjclass[2000]{ Primary 46E35,47B38, Secondary 35C15}
\keywords{Norm estimate, Dirac operator, trace, Sobolev spaces, Slobodeckij
spaces, Elliptic BVP}
\thanks{This paper is in final form and no version of it will be submitted
for publication elsewhere.}

\begin{abstract}
We present norm estimates for solutions of first and second order elliptic
BVPs of the Dirac operator $D=\sum_{j=1}^{n}e_{j}\partial _{x_{j}}$
considered over bounded and smooth domain $\Omega $ of $%
\mathbb{R}
^{n}$. The solutions whose norms to be estimated are in some Sobolev spaces $%
W^{k,p}\left( \Omega \right) $ and the boundary conditions as traces of
solutions and their derivatives are in some Slobodeckij spaces $W^{\lambda
,p}\left( \partial \Omega \right) $ where $\lambda $ is some non integer but
fractional number, for $1\leq p<\infty $ and $k\in 
\mathbb{Z}
$.
\end{abstract}

\maketitle

\section{\protect\bigskip \textbf{Algebraic and Analytic Rudiments of }$%
Cl_{n}$}

Let $\{e_{j}:j=1,2,...,n\}$ be an orthonormal basis for $%
\mathbb{R}
^{n}$ that is equipped with an inner product so that 
\begin{equation}
e_{i}e_{j}+e_{j}e_{i}=-2\delta _{ij}e_{0}  \label{inner product 1}
\end{equation}%
where $\delta _{ij}$ is the Kronecker delta. The inner product satisfies an
anti commutative relation 
\begin{equation}
x^{2}=-\Vert x\Vert ^{2}  \label{inner product 2}
\end{equation}

Therefore $%
\mathbb{R}
^{n}$ with these properties of base vectors generates a non commutative
algebra called Clifford algebra denoted by $Cl_{n}$.

\ 

The basis of $Cl_{n}$ will then be 
\begin{equation*}
\{e_{A}:A\subset \{1<2<3<...<n\}\}
\end{equation*}%
which implies: 
\begin{equation*}
\dim (Cl_{n})=2^{n}
\end{equation*}

The object $e_{0}$ used above is the identity element of the Clifford
algebra $Cl_{n}$. \ 

\ 

Representation of elemnets of $Cl_{n}$: every $a\in Cl_{n}$ is represented by%
\begin{equation}
a=\sum e_{A}a_{A}  \label{Clifford element}
\end{equation}%
where $a_{A}$ is a real number.

\ 

Thus every $x=(x_{1},x_{2},...,x_{n})\in 
\mathbb{R}
^{n}$ can be identified with $\sum_{j=1}^{n}e_{j}x_{j}$ of $Cl_{n}$ and
therefore we have an embedding 
\begin{equation*}
\mathbb{R}
^{n}\hookrightarrow Cl_{n}
\end{equation*}

We also define what is called a Clifford conjugate of 
\begin{equation*}
a=\sum e_{A}a_{A}
\end{equation*}%
as 
\begin{equation*}
\overline{a}=\sum \overline{e}_{A}a_{A}
\end{equation*}%
where%
\begin{equation*}
\overline{e_{j_{1}}...e}_{j_{r}}=\left( -1\right) ^{r}e_{j_{r}}...e_{j_{1}}
\end{equation*}

For instance for $i,j=1,2,...,n$,

\begin{equation*}
\overline{e}_{j}=-e_{j},\text{ \ }e_{j}^{2}=-1
\end{equation*}%
and for 
\begin{equation*}
i\neq j:\overline{e_{i}e}_{j}=(-1)^{2}e_{j}e_{i}=e_{j}e_{i}
\end{equation*}

\begin{definition}
\bigskip We define the Clifford norm of 
\begin{equation*}
a=\sum e_{A}a_{A}\in Cl_{n}
\end{equation*}%
by 
\begin{equation}
\Vert a\Vert =\left( \left( a\overline{a}\right) _{0}\right) ^{\frac{1}{2}%
}=\left( \underset{A}{\sum a_{A}^{2}}\right) ^{\frac{1}{2}}
\label{Clifford Norm}
\end{equation}%
where $\left( a\right) _{0}$ is the real part of $a\overline{a}$.
\end{definition}

\ \ \ \ \ \ \ \ \ \ \ \ \ \ \ 

The norm $\Vert .\Vert $ satisfies the inequality: 
\begin{equation}
\Vert ab\Vert \leq c\left( n\right) \Vert a\Vert \Vert b\Vert
\label{Norm Inequality}
\end{equation}%
with $c\left( n\right) $ a dimensional constant.

\ 

Also each non zero element $x\in 
\mathbb{R}
^{n}$ has an inverse given by :%
\begin{equation}
x^{-1}=\frac{\overline{x}}{\Vert x\Vert ^{2}}  \label{inverse}
\end{equation}

$\sphericalangle $ \ In the article it is always the case that $1<p<\infty $
unless otherwise specified and $\Omega $ is a bounded and smooth (at least
with $C^{1}$ - boundary $\partial \Omega $) domain of $%
\mathbb{R}
^{n}$

\ \ \ \ 

\bigskip A Clifford valued ($Cl_{n}$- valued) function $f$ defined on $%
\Omega $ as%
\begin{equation*}
f:\Omega \longrightarrow Cl_{n}
\end{equation*}%
has a representation $\ $

\begin{equation}
\ f=\sum_{A}e_{A}f_{A}  \label{Clifford valued function}
\end{equation}%
where $f_{A}:\Omega \longrightarrow 
\mathbb{R}
$ is a real valued component or section of $f$.

\ \ 

\begin{definition}
For a function $f\in C^{1}\left( \Omega \right) \cap C\left( \overline{%
\Omega }\right) $, we define the Dirac derivative of $f$ by 
\begin{equation}
Df\left( x\right) =\sum_{j=1}^{n}e_{j}\partial _{x_{j}}f\left( x\right)
\label{Dirac}
\end{equation}

A function $f:\Omega \longrightarrow Cl_{n}$ is called left monogenic or
left Clifford analytic over $\Omega $ if 
\begin{equation*}
Df\left( x\right) =0,\text{ }\forall x\in \Omega
\end{equation*}%
and likewise it is called right monogenic over $\Omega $ if 
\begin{equation*}
f(x)D=\sum_{j=1}^{n}\partial _{x_{j}}f\left( x\right) e_{j}=0,\text{ }%
\forall x\in \Omega
\end{equation*}
\end{definition}

An example of both left and right monogenic function defined over $%
\mathbb{R}
^{n}\backslash \{0\}$ is given by 
\begin{equation*}
\psi \left( x\right) =\frac{\overline{x}}{\omega _{n}\Vert x\Vert ^{n}}
\end{equation*}%
where $\omega _{n}$ is the surface area of the unit sphere in $%
\mathbb{R}
^{n}$.

\ \ 

The function $\psi $\ is also a fundamental solution to the Dirac operator $%
D $ and we define integral transforms as convolutions of $\psi $ with
functions of some function spaces below.

\ 

\begin{definition}
Let $f\in C^{1}\left( \Omega ,Cl_{n}\right) \cap C\left( \overline{\Omega }%
\right) $.

We define two integral transforms as follow:

\begin{equation}
\zeta _{\Omega }f\left( x\right) =\int_{\Omega }\psi \left( y-x\right)
f\left( y\right) d\Omega _{y},\text{ \ }x\in \Omega  \label{Theodorescu}
\end{equation}

\begin{equation}
\xi _{\partial \Omega }f\left( x\right) =\int_{\partial \Omega }\psi \left(
y-x\right) \upsilon \left( y\right) f\left( y\right) d\partial \Omega _{y},%
\text{ \ }x\notin \partial \Omega  \label{Feuter}
\end{equation}
\end{definition}

\ \ \ \ \ \ \ \ \ \ \ \ \ \ \ \ \ \ \ 

The integral transform defined in $\left( \ref{Theodorescu}\right) $ a
domain integral is called the Theodorescu transform or the Cauchy transform.
It is a convolution $\psi \ast f$ over $\Omega $. The integral transform
defined in $\left( \ref{Feuter}\right) $ is some times called the \textit{%
Feuter} transform as a boundary integral which again is a convolution $\psi
\ast \upsilon f$ over $\partial \Omega $. $\upsilon \left( y\right) $ is a
unit normal vector pointing outward at $y\in \partial \Omega $.

\ 

\section{Sobolev and Slobodeckij Spaces}

\ \ 

\begin{definition}
For $1<p<\infty $, $k\in 
\mathbb{N}
\cup \{0\}$ we define:
\end{definition}

\begin{description}
\item[I] The Sobolev space $W^{k,p}\left( \Omega \right) $ as%
\begin{equation*}
W^{k,p}\left( \Omega \right) :=\{f\in \tciLaplace ^{p}\left( \Omega \right)
:D^{\alpha }f\in \tciLaplace ^{p}\left( \Omega \right) ,\text{ \ }\Vert
\alpha \Vert \leq k\}
\end{equation*}%
with norm $\ $%
\begin{equation}
\Vert f\Vert _{W^{k,p}\left( \Omega \right) }=\left( \underset{\Vert \alpha
\Vert \leq k}{\sum }\int_{\Omega }|D^{\alpha }f|^{p}dx\right) ^{\frac{1}{p}}
\label{Sobolev Norm}
\end{equation}

\item[II] The Slobodeckij spaces for $0<\lambda <1$ as%
\begin{equation*}
W^{\lambda ,p}\left( \partial \Omega \right) :=\{f\in \tciLaplace ^{p}\left(
\partial \Omega \right) :\int_{\partial \Omega }\int_{\partial \Omega }\frac{%
|f\left( x\right) -f\left( y\right) |^{p}}{|x-y|^{n+\lambda p-1}}d\sigma
_{x}d_{\sigma y}<\infty \}
\end{equation*}%
and norm is defined by $\ $%
\begin{equation}
\Vert f\Vert _{W^{\lambda ,p}\left( \partial \Omega \right) }=\left(
\int_{\partial \Omega }\int_{\partial \Omega }\frac{|f\left( x\right)
-f\left( y\right) |^{p}}{|x-y|^{n+\lambda p-1}}d\sigma _{x}d_{\sigma
y}\right) ^{\frac{1}{p}}  \label{Slobodeckij Norm}
\end{equation}

\item[III] The Slobodeckij spaces for $\lambda =[\lambda ]+\{\lambda \}$
where $0<\{\lambda \}<1$ :%
\begin{equation*}
W^{\lambda ,p}\left( \partial \Omega \right) :=\{f\in W^{[\lambda ],p}\left(
\partial \Omega \right) :\underset{\Vert \alpha \Vert \leq \lbrack \lambda ]}%
{\sum }\int_{\partial \Omega }|Df|^{p}d\sigma _{x}+\underset{\Vert \alpha
\Vert =[\lambda ]}{\sum }\int_{\partial \Omega }\int_{\partial \Omega }\frac{%
|D^{\alpha }f\left( x\right) -D^{\alpha }f\left( y\right) |^{p}}{%
|x-y|^{n+\{\lambda \}p-1}}d\sigma _{x}d_{\sigma y}<\infty \}
\end{equation*}%
and hence norm is given by$\ $%
\begin{equation}
\Vert f\Vert _{W^{\lambda ,p}\left( \partial \Omega \right) }=\left( 
\underset{\Vert \alpha \Vert \leq \lbrack \lambda ]}{\sum }\int_{\partial
\Omega }|Df|^{p}d\sigma _{x}+\underset{\Vert \alpha \Vert =[\lambda ]}{\sum }%
\int_{\partial \Omega }\int_{\partial \Omega }\frac{|D^{\alpha }f\left(
x\right) -D^{\alpha }f\left( y\right) |^{p}}{|x-y|^{n+\{\lambda \}p-1}}%
d\sigma _{x}d_{\sigma y}\right) ^{\frac{1}{p}}  \label{Slobodeckij 2 Norm}
\end{equation}
\end{description}

\ \ \ \ \ \ \ 

In the definitions of the Slobodeckij spaces and associated norms, the
irregularity exponent $n+\{\lambda \}p-1$ is due to the fact that the
dimension of $\partial \Omega $ is $n-1$ and $d\sigma $ is a hypersurface
measure on $\partial \Omega $.

\ 

Slobodeckij spaces as subspaces of Sobolev spaces but with fractional
exponents are analogues of the H\"{o}lder spaces in classical spaces of
continuous functions.

\ 

\section{Some Properties and Relations Between $D,\protect\zeta _{\Omega }$,$%
\protect\tau $ and $\protect\xi _{\partial \Omega }$}

\ 

\begin{proposition}
$D:W^{k,p}\left( \Omega ,Cl_{n}\right) \longrightarrow W^{k-1,p}\left(
\Omega ,Cl_{n}\right) $ is continuous with

\begin{equation*}
\Vert Df\Vert _{W^{k-1,p}\left( \Omega ,Cl_{n}\right) }\leq \gamma \Vert
f\Vert _{W^{k,p}\left( \Omega ,Cl_{n}\right) }
\end{equation*}%
for $\gamma =\gamma \left( n,p,\Omega \right) $ a positive constant.
\end{proposition}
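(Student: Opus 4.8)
The plan is to unwind the definitions. The Dirac operator $D=\sum_{j=1}^{n}e_{j}\partial_{x_{j}}$ acts on a Clifford-valued function componentwise on the scalar sections: if $f=\sum_{A}e_{A}f_{A}$, then $Df=\sum_{A}\sum_{j=1}^{n}e_{j}e_{A}\,\partial_{x_{j}}f_{A}$, so every component of $Df$ is a finite Clifford-linear combination (with coefficients of modulus one in the Clifford norm, up to the dimensional constant $c(n)$ of $\left(\ref{Norm Inequality}\right)$) of first-order partial derivatives $\partial_{x_{j}}f_{A}$. Hence the key observation is that $Df$ is built entirely from first partials of $f$, so applying a multi-index derivative $D^{\alpha}$ with $\Vert\alpha\Vert\le k-1$ to $Df$ produces terms of the form $e_{j}e_{A}\,D^{\alpha}\partial_{x_{j}}f_{A}=e_{j}e_{A}\,D^{\beta}f_{A}$ with $\Vert\beta\Vert=\Vert\alpha\Vert+1\le k$.

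First I would fix $f\in W^{k,p}(\Omega,Cl_{n})$ and write out $\Vert Df\Vert_{W^{k-1,p}(\Omega,Cl_{n})}^{p}=\sum_{\Vert\alpha\Vert\le k-1}\int_{\Omega}|D^{\alpha}(Df)|^{p}\,dx$ using $\left(\ref{Sobolev Norm}\right)$. Next I would substitute the expansion of $Df$ and use linearity of $D^{\alpha}$ together with the Clifford norm inequality $\left(\ref{Norm Inequality}\right)$ and the elementary inequality $\bigl(\sum_{i=1}^{m}a_{i}\bigr)^{p}\le m^{p-1}\sum_{i=1}^{m}a_{i}^{p}$ for nonnegative reals to bound $|D^{\alpha}(Df)|^{p}$ pointwise by a constant (depending only on $n$ and $p$, via the number $2^{n}$ of basis elements and the $n$ partial derivatives and $c(n)$) times $\sum_{A}\sum_{j=1}^{n}|D^{\alpha}\partial_{x_{j}}f_{A}|^{p}$. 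Integrating and summing over $\alpha$ with $\Vert\alpha\Vert\le k-1$, each term $\int_{\Omega}|D^{\alpha}\partial_{x_{j}}f_{A}|^{p}\,dx$ is one of the summands $\int_{\Omega}|D^{\beta}f_{A}|^{p}\,dx$ with $\Vert\beta\Vert\le k$ appearing in $\Vert f\Vert_{W^{k,p}(\Omega,Cl_{n})}^{p}$, so the whole sum is controlled by $\gamma^{p}\Vert f\Vert_{W^{k,p}(\Omega,Cl_{n})}^{p}$ for a suitable $\gamma=\gamma(n,p,\Omega)$. Taking $p$-th roots gives the claimed estimate, and since multi-indices $\beta$ may be counted with multiplicity the constant is finite; the dependence on $\Omega$ enters only if one wishes to phrase things via equivalent norms, otherwise $\gamma=\gamma(n,p)$ suffices.

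There is no serious obstacle here: the statement is essentially the observation that $D$ lowers Sobolev order by exactly one because it is a first-order constant-coefficient differential operator, and the Clifford-algebraic bookkeeping is handled once and for all by $\left(\ref{Norm Inequality}\right)$. The only point requiring a little care is the combinatorial constant: one must check that the number of terms produced by expanding $D^{\alpha}(Df)$ over the $2^{n}$ basis elements $e_{A}$ and the $n$ directions $e_{j}$ is uniformly bounded (it is, by $n\cdot 2^{n}$), so that the constant $\gamma$ genuinely depends only on $n$ and $p$. Continuity of $D$ as a map between the two Banach spaces is then immediate from linearity together with the operator-norm bound just established.
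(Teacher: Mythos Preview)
Your argument is correct and follows essentially the same idea as the paper's proof: since $D$ is a first-order constant-coefficient operator, applying $D^{\alpha}$ with $\Vert\alpha\Vert\le k-1$ to $Df$ produces only derivatives of $f$ of total order $\le k$, hence is controlled by $\Vert f\Vert_{W^{k,p}}$. The paper carries this out in a single line, absorbing the extra derivative from $D$ into the multi-index summation and arriving at the constant $c=1$, whereas you work through the Clifford expansion $Df=\sum_{A}\sum_{j}e_{j}e_{A}\,\partial_{x_{j}}f_{A}$ explicitly, invoke the norm inequality $\left(\ref{Norm Inequality}\right)$, and obtain a constant $\gamma=\gamma(n,p)$ reflecting the $n\cdot 2^{n}$ terms; this extra care is warranted, since the paper's brief chain of (in)equalities glosses over the Clifford bookkeeping that you make precise.
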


\ 

\begin{proof}
Let $f\in W^{k,p}\left( \Omega ,Cl_{n}\right) $. We need to show that

\begin{equation*}
\Vert Df\Vert _{W^{k-1,p}\left( \Omega ,Cl_{n}\right) }\leq c\Vert f\Vert
_{W^{k,p}\left( \Omega ,Cl_{n}\right) }
\end{equation*}%
for some positive constant $c$.

\begin{equation*}
f\in W^{k,p}\left( \Omega ,Cl_{n}\right) \Longrightarrow \Vert f\Vert
_{W^{k,p}\left( \Omega ,Cl_{n}\right) }=\left( \underset{\Vert \alpha \Vert
\leq k}{\sum \int_{\Omega }}|D^{\alpha }f|^{p}dx\right) ^{\frac{1}{p}}<\infty
\end{equation*}%
But then

\begin{eqnarray*}
\Vert Df\Vert _{W^{k-1,p}\left( \Omega ,Cl_{n}\right) } &=&\left( \underset{%
\Vert \alpha \Vert \leq k-1}{\sum \int_{\Omega }}|D^{\alpha }f|^{p}dx\right)
^{\frac{1}{p}} \\
&\leq &\left( \underset{\Vert \alpha \Vert \leq k-1}{\sum \int_{\Omega }}%
|D^{\alpha }f|^{p}dx+\underset{\Vert \alpha \Vert =k-1}{\sum \int_{\Omega }}%
|D^{\alpha }f|^{p}dx\right) ^{\frac{1}{p}} \\
&=&\left( \underset{\Vert \alpha \Vert \leq k}{\sum \int_{\Omega }}%
|D^{\alpha }f|^{p}dx\right) ^{\frac{1}{p}} \\
&=&\Vert f\Vert _{W^{k,p}\left( \Omega ,Cl_{n}\right) }
\end{eqnarray*}

Therefore for $c=1$, the proposition is proved.
\end{proof}

\ \ \ \ \ \ \ \ \ \ \ \ \ \ \ \ \ \ \ 

\begin{proposition}
$D:\tciLaplace ^{p}\left( \Omega \right) \longrightarrow W^{-1,p}\left(
\Omega \right) $ is continuous for $1<p<\infty $.
\end{proposition}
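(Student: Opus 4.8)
The plan is to read $D$ on $L^{p}\left( \Omega \right) $ in the distributional sense and to use the standard identification of the negative--order space $W^{-1,p}\left( \Omega \right) $ with the dual $\left( W_{0}^{1,q}\left( \Omega ,Cl_{n}\right) \right) ^{\ast }$, where $q=\frac{p}{p-1}$ is the exponent conjugate to $p$. Accordingly, for $f\in L^{p}\left( \Omega ,Cl_{n}\right) $ one first \emph{defines} the functional $Df$ on test functions $\varphi \in C_{c}^{\infty }\left( \Omega ,Cl_{n}\right) $ by transposing the operator,
\begin{equation*}
\left\langle Df,\varphi \right\rangle :=\int_{\Omega }\left( f\left( x\right) \,\overline{D\varphi \left( x\right) }\right) _{0}\,dx ,
\end{equation*}
which, when $f\in C^{1}\left( \Omega ,Cl_{n}\right) $, is exactly the pairing produced by integration by parts against the Dirac operator of Definition 2 (the boundary term vanishing since $\varphi $ has compact support). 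The first step is thus simply to record this definition, to note that $f\mapsto Df$ is linear, and that it extends the classical Dirac derivative.

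Next I would estimate the pairing. By the pointwise Clifford norm inequality $\left( \ref{Norm Inequality}\right) $ followed by H\"{o}lder's inequality with exponents $p$ and $q$,
\begin{equation*}
\left| \left\langle Df,\varphi \right\rangle \right| \leq c\left( n\right) \int_{\Omega }\left\Vert f\right\Vert \,\left\Vert D\varphi \right\Vert \,dx\leq c\left( n\right) \left\Vert f\right\Vert _{L^{p}\left( \Omega \right) }\left\Vert D\varphi \right\Vert _{L^{q}\left( \Omega \right) }.
\end{equation*}
Since $\left\Vert D\varphi \right\Vert \leq \sum_{j=1}^{n}\left\Vert e_{j}\partial _{x_{j}}\varphi \right\Vert \leq c\left( n\right) \sum_{j=1}^{n}\left\Vert \partial _{x_{j}}\varphi \right\Vert $ pointwise, the factor $\left\Vert D\varphi \right\Vert _{L^{q}\left( \Omega \right) }$ is dominated by $\left\Vert \varphi \right\Vert _{W^{1,q}\left( \Omega \right) }$ --- which is precisely Proposition 1 applied with $k=1$ and exponent $q$. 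Hence
\begin{equation*}
\left| \left\langle Df,\varphi \right\rangle \right| \leq C\left( n,p\right) \left\Vert f\right\Vert _{L^{p}\left( \Omega \right) }\left\Vert \varphi \right\Vert _{W^{1,q}\left( \Omega \right) }\qquad \text{for all }\varphi \in C_{c}^{\infty }\left( \Omega ,Cl_{n}\right) .
\end{equation*}

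Finally, because $C_{c}^{\infty }\left( \Omega ,Cl_{n}\right) $ is dense in $W_{0}^{1,q}\left( \Omega ,Cl_{n}\right) $, the functional $Df$ extends uniquely to an element of $\left( W_{0}^{1,q}\left( \Omega ,Cl_{n}\right) \right) ^{\ast }=W^{-1,p}\left( \Omega \right) $ obeying $\left\Vert Df\right\Vert _{W^{-1,p}\left( \Omega \right) }\leq C\left( n,p\right) \left\Vert f\right\Vert _{L^{p}\left( \Omega \right) }$, and linearity of $f\mapsto Df$ then yields the asserted continuity. I do not expect a real obstacle: the only point that needs care is pinning down the definition of the negative--order Sobolev space $W^{-1,p}\left( \Omega \right) $ as a dual space and fixing the Clifford--valued duality pairing so that the transposition identity above genuinely reduces, on smooth functions, to integration by parts for $D$; once that bookkeeping is in place the bound is a one--line combination of H\"{o}lder's inequality with the preceding proposition.
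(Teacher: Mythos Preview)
Your proposal is correct and follows essentially the same route as the paper: identify $W^{-1,p}(\Omega)$ with the dual of $W_{0}^{1,q}(\Omega)$, transpose $D$ onto the test function, apply H\"older's inequality, and then bound $\Vert D\varphi\Vert_{L^{q}}$ by $\Vert\varphi\Vert_{W^{1,q}}$ via Proposition~1. The paper is terser---it writes $\langle Df,v\rangle=\langle f,Dv\rangle$ directly for $v\in W_{0}^{1,q}$ and obtains constant $1$ rather than your $C(n,p)$---but the argument is the same.
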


\ 

\begin{proof}
Let $f\in \tciLaplace ^{p}\left( \Omega \right) $. Then

\begin{equation*}
\Vert Df\Vert _{W^{-1,p}\left( \Omega \right) }=\sup \{\frac{|\langle
Df,v\rangle |}{\Vert v\Vert _{W_{0}^{1,q}\left( \Omega \right) }}:v\neq
0,v\in W_{0}^{1,q}\left( \Omega \right) \}
\end{equation*}%
for $p^{-1}+q^{-1}=1$.

\ 

But 
\begin{equation*}
|\langle Df,v\rangle |=|\langle f,Dv\rangle |\leq \Vert f\Vert _{\tciLaplace
^{p}\left( \Omega \right) }\Vert Dv\Vert _{\tciLaplace ^{q}\left( \Omega
\right) }\leq \Vert f\Vert _{\tciLaplace ^{p}\left( \Omega \right) }\Vert
v\Vert _{W_{0}^{1,q}\left( \Omega \right) }
\end{equation*}

Thus by the Cauchy-Schwartz inequality we have

\begin{equation*}
\frac{|\langle Df,v\rangle |}{\Vert v\Vert _{W_{0}^{1,q}\left( \Omega
\right) }}\leq \frac{\Vert f\Vert _{\tciLaplace ^{p}\left( \Omega \right)
}\Vert v\Vert _{W_{0}^{1,q}\left( \Omega \right) }}{\Vert v\Vert
_{W_{0}^{1,q}\left( \Omega \right) }}=\Vert f\Vert _{\tciLaplace ^{p}\left(
\Omega \right) }
\end{equation*}%
Therefore

\begin{eqnarray*}
\Vert Df\Vert _{W^{-1,p}\left( \Omega \right) } &=&\sup \{\frac{|\langle
Df,v\rangle |}{\Vert v\Vert _{W_{0}^{1,q}\left( \Omega \right) }}:v\neq 0,%
\text{ }v\in W_{0}^{1,q}\left( \Omega \right) \} \\
&\leq &\sup \{\frac{\Vert f\Vert _{\tciLaplace ^{p}\left( \Omega \right)
}\Vert v\Vert _{W_{0}^{1,q}\left( \Omega \right) }}{\Vert v\Vert
_{W_{0}^{1,q}\left( \Omega \right) }}:v\neq 0,\text{ }v\in W_{0}^{1,q}\left(
\Omega \right) \} \\
&=&\Vert f\Vert _{\tciLaplace ^{p}\left( \Omega \right) }
\end{eqnarray*}

\ \ 
\end{proof}

\ \ \ \ \ \ \ \ \ \ \ \ \ \ \ \ 

\begin{proposition}
(Mapping properties) (\cite{gksp1}, \cite{dr1})

\ 

Let $k\in 
\mathbb{N}
\cup \{0\}$ and $1<p<\infty $. Then there are positive constants $\beta
=\beta \left( n,p,\Omega \right) $, $\theta =\theta \left( n,p,\Omega
\right) $ and $\delta =\delta \left( n,p,\Omega \right) $ such that

\ 
\begin{equation}
\zeta _{\Omega }:W^{k,p}\left( \Omega ,Cl_{n}\right) \longrightarrow
W^{k+1,p}\left( \Omega ,Cl_{n}\right)  \label{Theodorescu property}
\end{equation}%
with%
\begin{equation*}
\Vert \zeta _{\Omega }f\Vert _{W^{k+1,p}\left( \Omega ,Cl_{n}\right) }\leq
\beta \Vert f\Vert _{W^{k,p}\left( \Omega ,Cl_{n}\right) }
\end{equation*}

\begin{equation}
\xi _{\partial \Omega }:W^{\lambda ,p}\left( \partial \Omega ,Cl_{n}\right)
\longrightarrow W^{\lambda +\frac{1}{p},p}\left( \Omega ,Cl_{n}\right)
\label{Feuter property}
\end{equation}%
with 
\begin{equation*}
\Vert \xi _{\partial \Omega }f\Vert _{W^{\lambda +\frac{1}{p},p}\left(
\Omega ,Cl_{n}\right) }\leq \theta \Vert f\Vert _{W^{\lambda ,p}\left(
\partial \Omega ,Cl_{n}\right) }
\end{equation*}

and 
\begin{equation}
\tau :W^{k,p}(\Omega ,Cl_{n})\longrightarrow W^{k-\frac{1}{p},p}\left(
\partial \Omega ,Cl_{n}\right)  \label{trace property}
\end{equation}%
is the trace operator with

\begin{eqnarray*}
&&\underset{\Vert \alpha \Vert \leq \lbrack \lambda +\frac{1}{p}]}{\sum }%
\int_{\Omega }|D^{\alpha }\tau f|^{p}dx+\underset{\Vert \alpha \Vert
=[\lambda +\frac{1}{p}]}{\sum }\int_{\Omega }\int_{\Omega }\frac{|D^{\alpha
}\tau f(x)-D^{\alpha }\tau f(y)|^{p}}{|x-y|^{n+\{\lambda +\frac{1}{p}\}p}}%
dxdy \\
&\leq &\delta ^{p}\left( \underset{\Vert \alpha \Vert \leq \lbrack \lambda ]}%
{\sum }\int_{\partial \Omega }|D^{\alpha }f|^{p}dx+\underset{\Vert \alpha
\Vert =[\lambda ]}{\sum }\int_{\partial \Omega }\int_{\partial \Omega }\frac{%
|D^{\alpha }f(x)-D^{\alpha }f(y)|^{p}}{|x-y|^{n-1+\{\lambda +\frac{1}{p}\}p}}%
d\sigma _{x}d\sigma _{y}\right)
\end{eqnarray*}
\end{proposition}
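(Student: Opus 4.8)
The plan is to prove the three mapping properties separately, since each rests on a different classical fact; the statement is essentially a packaging of standard elliptic/potential-theoretic estimates in the Clifford setting, so the work is mostly in citing the right tool and tracking the bookkeeping of the Slobodeckij norms.

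For the Theodorescu transform $\zeta_\Omega$, I would argue componentwise: writing $f=\sum_A e_A f_A$ and recalling $\psi(y-x)=\overline{(y-x)}/(\omega_n\|y-x\|^n)$, the kernel is (up to the Clifford multiplication, which is controlled by the norm inequality $\|ab\|\le c(n)\|a\|\|b\|$ from $(\ref{Norm Inequality})$) a Calder\'on--Zygmund kernel of convolution type of order $-(n-1)$, i.e. a first-order Riesz-potential-type kernel. Hence $\partial_{x_j}\zeta_\Omega f$ is, modulo a constant-multiple-of-$f$ term coming from differentiating the singularity, a singular integral operator of Calder\'on--Zygmund type, which is bounded on $\tciLaplace^p(\Omega)$ for $1<p<\infty$. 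Iterating, every $D^\alpha$ with $\|\alpha\|\le k+1$ applied to $\zeta_\Omega f$ is estimated by $D^\beta f$ with $\|\beta\|\le k$, giving $\|\zeta_\Omega f\|_{W^{k+1,p}}\le\beta\|f\|_{W^{k,p}}$; this is exactly the content of the cited references, so I would invoke \cite{gksp1}, \cite{dr1} here rather than reprove $L^p$-boundedness of CZ operators.

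For the Feuter transform $\xi_{\partial\Omega}$, the key point is the trace/jump behaviour of a single-layer-type Clifford potential: convolution of the fundamental solution $\psi$ against a density $\upsilon f$ living on the $(n-1)$-dimensional surface $\partial\Omega$ gains one order of smoothness in the bulk but only fractionally, because the density sits on a lower-dimensional set — this is the origin of the $\tfrac1p$ shift. Concretely I would use the known mapping property that the single-layer potential takes $W^{\lambda,p}(\partial\Omega)$ into $W^{\lambda+1/p+1-1/p,p}$... more precisely into $W^{\lambda+1/p,p}(\Omega)$, which is the statement $(\ref{Feuter property})$; again this is classical (Cauchy-type integral estimates, cf. the cited literature) and I would cite it, noting only that the $Cl_n$-valued case reduces to the scalar case componentwise after absorbing $\upsilon(y)$ and the Clifford products into the constant via $(\ref{Norm Inequality})$.

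For the trace operator $\tau$, I would appeal to the Gagliardo trace theorem: for $W^{k,p}(\Omega)$ on a domain with (at least $C^1$, here smooth) boundary, the trace lands in the Slobodeckij space $W^{k-1/p,p}(\partial\Omega)$ and the trace is bounded; the displayed inequality in the statement is simply the expansion of $\|\tau f\|_{W^{k-1/p,p}(\partial\Omega)}^p\le \delta^p\|f\|_{W^{k,p}(\Omega)}^p$ using definition II--III of the Slobodeckij norm, with $\lambda+\tfrac1p$ in place of $k$ so that $[\lambda+\tfrac1p]$ and $\{\lambda+\tfrac1p\}$ appear. I expect the main obstacle — really the only non-mechanical point — to be reconciling the exponents: verifying that the surface-measure dimension count ($\dim\partial\Omega=n-1$) produces precisely the irregularity exponent $n-1+\{\lambda+\tfrac1p\}p$ on the boundary double integral and $n+\{\lambda+\tfrac1p\}p$ on the domain double integral as written, and checking these are consistent with the fractional index bookkeeping in definitions II and III. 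Once the exponents are matched, the estimate is just the trace theorem quoted from \cite{gksp1}, \cite{dr1}, applied componentwise to $f=\sum_A e_A f_A$ and summed, with the Clifford norm equivalence $(\ref{Clifford Norm})$ ensuring $|f|^p\asymp\sum_A|f_A|^p$.
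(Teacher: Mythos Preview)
Your proposal sketches genuine proofs of the three mapping properties via Calder\'on--Zygmund theory, single-layer potential estimates, and the Gagliardo trace theorem, reduced componentwise to the scalar case. The paper, however, does not prove this proposition at all: it is stated with citations to \cite{gksp1} and \cite{dr1} and no argument is given --- it functions as a quoted result from the literature, used as a black box in the subsequent norm estimates. So your approach is not so much different as simply \emph{more}: you are supplying the outline of the analysis that the paper outsources entirely. What this buys you is a self-contained account and a check that the fractional exponents in the Slobodeckij norms are internally consistent; what the paper's approach (pure citation) buys is brevity, at the cost of relying on the reader to consult the references for any actual content.
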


\ \ \ \ \ \ \ \ \ \ \ \ \ \ \ \ 

\begin{proposition}
The composition $\xi _{\partial \Omega }\circ \tau $ preserves regularity of
a function in a Sobolev space.
\end{proposition}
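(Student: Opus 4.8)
The plan is to realize $\xi _{\partial \Omega }\circ \tau $ as the composition of the two mapping properties recorded in the preceding proposition and to chase the Sobolev indices so that the domain and the target of the composite coincide. Concretely, I would start with an arbitrary $f\in W^{k,p}\left( \Omega ,Cl_{n}\right) $ with $k\in \mathbb{N}$ and $1<p<\infty $, and first apply the trace operator. By $\left( \ref{trace property}\right) $ we have $\tau f\in W^{k-\frac{1}{p},p}\left( \partial \Omega ,Cl_{n}\right) $ together with a bound of the form $\Vert \tau f\Vert _{W^{k-\frac{1}{p},p}\left( \partial \Omega ,Cl_{n}\right) }\leq \delta \Vert f\Vert _{W^{k,p}\left( \Omega ,Cl_{n}\right) }$, with $\delta =\delta \left( n,p,\Omega \right) $. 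Since $1<p<\infty $ we have $0<\frac{1}{p}<1$, so $\lambda :=k-\frac{1}{p}$ is a genuinely fractional exponent with $[\lambda ]=k-1$ and $\{\lambda \}=1-\frac{1}{p}$; thus $W^{\lambda ,p}\left( \partial \Omega ,Cl_{n}\right) $ is an honest Slobodeckij space of type III above and the Feuter transform is applicable to $\tau f$.

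Next I would invoke the mapping property $\left( \ref{Feuter property}\right) $ of $\xi _{\partial \Omega }$ with this particular $\lambda $. It gives $\xi _{\partial \Omega }\left( \tau f\right) \in W^{\lambda +\frac{1}{p},p}\left( \Omega ,Cl_{n}\right) $ and $\Vert \xi _{\partial \Omega }\left( \tau f\right) \Vert _{W^{\lambda +\frac{1}{p},p}\left( \Omega ,Cl_{n}\right) }\leq \theta \Vert \tau f\Vert _{W^{\lambda ,p}\left( \partial \Omega ,Cl_{n}\right) }$. But $\lambda +\frac{1}{p}=k$, so $W^{\lambda +\frac{1}{p},p}\left( \Omega ,Cl_{n}\right) =W^{k,p}\left( \Omega ,Cl_{n}\right) $, i.e. the composite lands in exactly the space from which $f$ was taken. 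Chaining the two estimates yields $\Vert \xi _{\partial \Omega }\tau f\Vert _{W^{k,p}\left( \Omega ,Cl_{n}\right) }\leq \theta \delta \Vert f\Vert _{W^{k,p}\left( \Omega ,Cl_{n}\right) }$, so $\xi _{\partial \Omega }\circ \tau :W^{k,p}\left( \Omega ,Cl_{n}\right) \longrightarrow W^{k,p}\left( \Omega ,Cl_{n}\right) $ is continuous; this is precisely the assertion that regularity is preserved.

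The only delicate point, and the step I would treat most carefully, is the index bookkeeping at the interface of the two operators. One must check that the target $W^{k-\frac{1}{p},p}\left( \partial \Omega ,Cl_{n}\right) $ produced by $\tau $ is admissible as an input for $\xi _{\partial \Omega }$, which holds because $k-\frac{1}{p}$ is fractional, and that the integer-order space $W^{k,p}\left( \Omega ,Cl_{n}\right) $ is indeed the correct reading of $W^{\lambda +\frac{1}{p},p}\left( \Omega ,Cl_{n}\right) $ when $\{\lambda +\frac{1}{p}\}=0$, i.e. that the Slobodeckij scale appearing in $\left( \ref{Feuter property}\right) $ degenerates to the ordinary Sobolev space of definition I as the fractional part vanishes. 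Once these identifications are in place no further analysis is required: the statement is a pure composition of the already-established continuity results, with preserved-regularity constant $\theta \delta =\theta \delta \left( n,p,\Omega \right) $.
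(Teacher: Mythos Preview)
Your proof is correct and follows exactly the same approach as the paper: trace drops the regularity index by $\tfrac{1}{p}$, the Feuter transform raises it by $\tfrac{1}{p}$, and the composition therefore fixes $W^{k,p}(\Omega,Cl_n)$. In fact your version is more complete, since you also supply the continuity estimate $\Vert \xi_{\partial\Omega}\tau f\Vert_{W^{k,p}}\le \theta\delta\Vert f\Vert_{W^{k,p}}$ and flag the identification of $W^{\lambda+\frac{1}{p},p}$ with $W^{k,p}$ when $\{\lambda+\tfrac{1}{p}\}=0$, both of which the paper leaves implicit.
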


\ \ \ \ \ \ \ \ \ \ \ \ \ \ \ \ \ \ \ \ 

\begin{proof}
Indeed, $\tau $ makes a function to loose a regularity fractional exponent
of $\frac{1}{p}$ when taken along the boundary of the domain. But the
boundary or \textit{Feuter} integral $\xi _{\partial \Omega }$ augments the
regularity exponent of a function defined on the boundary by an exponent of $%
\frac{1}{p}$.

\ \ \ \ \ \ \ \ \ \ \ \ \ \ \ \ \ \ \ \ \ \ \ \ \ \ \ \ \ \ \ \ \ \ \ \ \ \
\ \ \ \ \ 

Therefore the composition operator $\xi _{\partial \Omega }\circ \tau $
preserves or fixes the regularity exponent of a function in a Sobolev space.

\ \ \ 
\end{proof}

\begin{proposition}
(Borel-Pompeiu )

Let $\ f\in W^{k,p}\left( \Omega ,Cl_{n}\right) .$ Then

\begin{equation*}
f=\xi _{\partial \Omega }\tau f+\zeta _{\Omega }Df
\end{equation*}
\end{proposition}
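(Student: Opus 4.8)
The plan is to derive the Borel--Pompeiu formula from the classical integral representation for smooth Clifford-valued functions and then extend it by density to all of $W^{k,p}(\Omega,Cl_n)$. First I would recall the pointwise version: for $f\in C^{1}(\Omega,Cl_n)\cap C(\overline{\Omega})$ and $x\in\Omega$, Stokes' theorem applied to the Clifford-valued differential form built from the fundamental solution $\psi(y-x)$ gives
\begin{equation*}
f(x)=\int_{\partial\Omega}\psi(y-x)\upsilon(y)f(y)\,d\partial\Omega_y+\int_{\Omega}\psi(y-x)Df(y)\,d\Omega_y,
\end{equation*}
which in the notation of the Definitions is exactly $f=\xi_{\partial\Omega}\tau f+\zeta_{\Omega}Df$, since on smooth functions the trace $\tau f$ is just the restriction $f|_{\partial\Omega}$. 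This identity comes from the reproducing property of $\psi$ as a fundamental solution of $D$ together with the fact that $D\psi(\cdot-x)=0$ away from $x$ and the singularity of $\psi$ at $y=x$ contributes precisely the factor normalizing by $\omega_n$; this is standard Clifford analysis and I would cite it as known.

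Next I would promote this from $C^1$ functions to $W^{k,p}(\Omega,Cl_n)$ by a density and continuity argument. Since $\Omega$ is bounded with $C^1$ boundary, $C^{\infty}(\overline{\Omega},Cl_n)$ is dense in $W^{k,p}(\Omega,Cl_n)$ for $1<p<\infty$. Pick a sequence $f_m\in C^{\infty}(\overline{\Omega},Cl_n)$ with $f_m\to f$ in $W^{k,p}(\Omega,Cl_n)$. For each $m$ the pointwise formula gives $f_m=\xi_{\partial\Omega}\tau f_m+\zeta_{\Omega}Df_m$ as an identity in $W^{k,p}(\Omega,Cl_n)$. Now I pass to the limit term by term: by Proposition on the trace, $\tau f_m\to\tau f$ in the appropriate Slobodeckij space $W^{k-\frac1p,p}(\partial\Omega,Cl_n)$; by the mapping property \eqref{Feuter property} of $\xi_{\partial\Omega}$, $\xi_{\partial\Omega}\tau f_m\to\xi_{\partial\Omega}\tau f$ in $W^{k,p}(\Omega,Cl_n)$; by Proposition 1, $Df_m\to Df$ in $W^{k-1,p}(\Omega,Cl_n)$, and by the mapping property \eqref{Theodorescu property} of $\zeta_{\Omega}$, $\zeta_{\Omega}Df_m\to\zeta_{\Omega}Df$ in $W^{k,p}(\Omega,Cl_n)$. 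Therefore the right-hand sides converge in $W^{k,p}(\Omega,Cl_n)$ to $\xi_{\partial\Omega}\tau f+\zeta_{\Omega}Df$, while the left-hand sides converge to $f$, so the two limits agree.

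The main obstacle, and the place that needs care, is making sure all the limiting identifications are happening in compatible function spaces so that the continuity statements actually apply. Specifically one must check that for smooth $f_m$ the trace $\tau f_m$ lands in the Slobodeckij space on which $\xi_{\partial\Omega}$ in \eqref{Feuter property} is continuous, i.e. matching $\lambda=k-\frac1p$ with $\lambda+\frac1p=k$; and that the regularity of $Df_m$, which drops by one under Proposition 1, is exactly gained back by $\zeta_{\Omega}$ in \eqref{Theodorescu property}. This bookkeeping is precisely the content of Proposition (the composition $\xi_{\partial\Omega}\circ\tau$ preserves regularity) combined with the Theodorescu mapping property, so once the index arithmetic is lined up the convergence is immediate. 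A secondary point worth a sentence is uniqueness of limits in $W^{k,p}$, which is automatic since it is a Banach space; hence the identity $f=\xi_{\partial\Omega}\tau f+\zeta_{\Omega}Df$ holds for every $f\in W^{k,p}(\Omega,Cl_n)$.
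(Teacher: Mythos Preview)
Your argument is correct and is the standard way to establish the Borel--Pompeiu identity in Sobolev spaces: prove the pointwise formula for $C^{1}(\Omega)\cap C(\overline{\Omega})$ functions via Stokes' theorem and the reproducing property of the fundamental solution $\psi$, then pass to $W^{k,p}(\Omega,Cl_n)$ by density using the continuity of $\tau$, $\xi_{\partial\Omega}$, $D$, and $\zeta_{\Omega}$ recorded in the mapping-property propositions. Your index bookkeeping (matching $\lambda=k-\tfrac{1}{p}$ so that $\xi_{\partial\Omega}$ restores the lost $\tfrac{1}{p}$, and noting $\zeta_{\Omega}$ recovers the order lost to $D$) is exactly what is needed, and you are right that Proposition~4 is precisely this observation.

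The paper, however, does not prove this proposition at all: it is stated as a known result (the classical Borel--Pompeiu relation in Clifford analysis, available in the cited monographs such as Brackx--Delanghe--Sommen and G\"urlebeck--K\"ahler--Ryan--Spr\"ossig) and is used as a black box in the subsequent existence propositions. So you have supplied a full proof where the paper simply cites the literature; your approach is the one those references use, and nothing is missing from it.
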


\begin{corollary}
(i) If $f\in W_{0}^{k,p}\left( \Omega ,Cl_{n}\right) $, then 
\begin{equation*}
f=\zeta _{\Omega }Df
\end{equation*}%
That is $D$ is a right inverse for $\zeta _{\Omega }$ and $\zeta _{\Omega }$
is a left inverse for $D$ over traceless spaces.

(ii) If $f$ is monogenic function over $\Omega $, then%
\begin{equation*}
f=\xi _{\partial \Omega }\tau f
\end{equation*}

Therefore monogenic functions are always Cauchy transforms of their traces
over the boundary.
\end{corollary}

\ \ \ \ \ \ \ \ \ \ \ \ \ \ \ \ \ \ \ \ \ \ \ \ \ \ \ \ \ 

\section{Elliptic First and Second Order BVPs}

\ 

\begin{proposition}
Let $\ f\in W^{k-1,p}\left( \Omega ,Cl_{n}\right) $ for $k\geq 1$. Then the
first order elliptic BVP:

\begin{equation}
\left\{ 
\begin{array}{c}
Du=f\text{ \ in }\Omega \\ 
\tau u=g\text{ on }\partial \Omega%
\end{array}%
\right.  \label{BVP 1}
\end{equation}

has a solution $u\in W^{k,p}\left( \Omega ,Cl_{n}\right) $ given by 
\begin{equation*}
u\left( x\right) =\xi _{\partial \Omega }g+\zeta _{\Omega }f
\end{equation*}
\end{proposition}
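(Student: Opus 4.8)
The plan is to verify directly that the proposed function $u=\xi_{\partial\Omega}g+\zeta_{\Omega}f$ belongs to $W^{k,p}(\Omega,Cl_{n})$ and satisfies both lines of $(\ref{BVP 1})$, using only the mapping properties, the Borel--Pompeiu formula and its Corollary from Section 3; for this to make sense the boundary datum is read in the natural trace space, $g\in W^{k-\frac{1}{p},p}(\partial\Omega,Cl_{n})$. For the membership, apply $(\ref{Theodorescu property})$ with exponent $k-1$: since $k\geq1$ and $f\in W^{k-1,p}(\Omega,Cl_{n})$ we get $\zeta_{\Omega}f\in W^{k,p}(\Omega,Cl_{n})$ with $\Vert\zeta_{\Omega}f\Vert_{W^{k,p}}\leq\beta\Vert f\Vert_{W^{k-1,p}}$; and apply $(\ref{Feuter property})$ with $\lambda=k-\frac{1}{p}$ (a genuinely non-integer exponent, since $1<p<\infty$) to get $\xi_{\partial\Omega}g\in W^{(k-\frac{1}{p})+\frac{1}{p},p}(\Omega,Cl_{n})=W^{k,p}(\Omega,Cl_{n})$ with $\Vert\xi_{\partial\Omega}g\Vert_{W^{k,p}}\leq\theta\Vert g\Vert_{W^{k-\frac{1}{p},p}}$. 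Adding, $u\in W^{k,p}(\Omega,Cl_{n})$ and $\Vert u\Vert_{W^{k,p}}\leq\theta\Vert g\Vert_{W^{k-\frac{1}{p},p}}+\beta\Vert f\Vert_{W^{k-1,p}}$, which is already a norm estimate of the type this paper is after.

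Next I would apply $D$. The Cauchy kernel $\psi(y-x)$ is left monogenic in $x$ for $x\neq y$, so the boundary integral $\xi_{\partial\Omega}g$ is monogenic in $\Omega$ and $D\xi_{\partial\Omega}g=0$. Since $\psi$ is a fundamental solution of $D$, the Theodorescu transform is a right inverse of $D$, i.e. $D\zeta_{\Omega}f=f$ (consistently with applying $D$ to the Borel--Pompeiu identity and using $D\xi_{\partial\Omega}=0$, and dual to part (i) of the Corollary). Hence $Du=D\xi_{\partial\Omega}g+D\zeta_{\Omega}f=0+f=f$ in $\Omega$, which is the first line of $(\ref{BVP 1})$.

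Finally I would apply the trace $\tau$, writing $\tau u=\tau\xi_{\partial\Omega}g+\tau\zeta_{\Omega}f$. The term $\tau\xi_{\partial\Omega}g$ should equal $g$, because $\xi_{\partial\Omega}$ restores exactly the fractional exponent $\frac{1}{p}$ that $\tau$ strips off and acts as a one-sided inverse of $\tau$ on boundary data — this is precisely the mechanism behind the Proposition that $\xi_{\partial\Omega}\circ\tau$ preserves regularity. The term $\tau\zeta_{\Omega}f$ should vanish, because the Theodorescu transform lands in the traceless part; one sees this by taking $\tau$ of the Borel--Pompeiu identity $w=\xi_{\partial\Omega}\tau w+\zeta_{\Omega}Dw$, using $\tau\xi_{\partial\Omega}=\mathrm{id}$ and the surjectivity of $D$ onto $W^{k-1,p}$ (which follows from $D\zeta_{\Omega}=\mathrm{id}$). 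Thus $\tau u=g+0=g$ on $\partial\Omega$, completing the verification that $u$ solves $(\ref{BVP 1})$.

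I expect the last step to be the real obstacle. Steps on membership and on $Du=f$ are soft consequences of the mapping estimates and the fundamental-solution property of $\psi$; but the identities $\tau\xi_{\partial\Omega}=\mathrm{id}$ and $\tau\zeta_{\Omega}=0$ encode the fine boundary behaviour of the Feuter and Theodorescu transforms — in effect the Plemelj--Sokhotski jump relations and the exact normalization of the Cauchy kernel — and making these rigorous (or, should they only hold modulo a compatibility condition on $g$, isolating that condition) is where the genuine work of the proof lies.
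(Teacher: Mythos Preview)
Your approach differs from the paper's, and the obstacle you flag is genuine. The paper does not verify the two equations of $(\ref{BVP 1})$ for the formula $u=\xi_{\partial\Omega}g+\zeta_{\Omega}f$; it runs the argument the other way. It simply invokes the Borel--Pompeiu relation: any $u\in W^{k,p}(\Omega,Cl_{n})$ satisfies $u=\xi_{\partial\Omega}(\tau u)+\zeta_{\Omega}(Du)$, so a solution of the BVP (with $Du=f$, $\tau u=g$) is necessarily represented by the stated formula. The regularity of $u$ and the placement $g\in W^{k-\frac{1}{p},p}(\partial\Omega,Cl_{n})$ are then read off from the mapping properties of $D$ and $\tau$, exactly as you do. In other words, the paper treats the proposition as a representation/uniqueness statement, not as an ab initio existence construction.

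Your direct-verification route is more ambitious, and the trace step is not just hard but actually fails as written: neither $\tau\xi_{\partial\Omega}=\mathrm{id}$ nor $\tau\zeta_{\Omega}=0$ holds in general. By the Plemelj--Sokhotski relations, the interior nontangential limit of $\xi_{\partial\Omega}g$ is $\tfrac{1}{2}(I+S_{\partial\Omega})g$, not $g$, and the Theodorescu transform $\zeta_{\Omega}f$ extends continuously up to $\partial\Omega$ with a typically nonzero trace. What is true is that the \emph{sum} has trace $g$ precisely when the pair $(f,g)$ is compatible, i.e.\ when $g$ already is the trace of some $W^{k,p}$ function with Dirac derivative $f$ --- which is exactly the Borel--Pompeiu direction the paper uses. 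So your instinct that a compatibility condition lurks here is correct; the paper's short argument sidesteps it by presupposing a solution and reading off its integral representation rather than checking the boundary condition for arbitrary data.
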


\ \ \ \ \ \ \ \ \ 

\begin{proof}
The proof follows from the Borel-Pompeiu relation. As to where exactly $u$
and $g$ belong, we make the argument : $f$ \ is in $W^{k-1,p}\left( \Omega
,Cl_{n}\right) $ and hence from the mapping property of $D$, we have $u$ to
be a function in $W^{k,p}\left( \Omega ,Cl_{n}\right) $.

\ 

Also from the mapping property of the trace operator $\tau $ we have 
\begin{equation*}
\tau u=u|_{\partial \Omega }=g\in W^{k-\frac{1}{p},p}\left( \partial \Omega
,Cl_{n}\right)
\end{equation*}
\end{proof}

\ \ \ \ 

\begin{proposition}
The solution $u\in W^{k,p}\left( \Omega ,Cl_{n}\right) $ has a norm estimate
:

\begin{eqnarray*}
\Vert u\Vert _{W^{k,p}\left( \Omega ,Cl_{n}\right) } &\leq &\gamma
_{1}\left( \underset{\Vert \alpha \Vert \leq k-1}{\sum }\int_{\partial
\Omega }|D^{\alpha }g|^{p}d\sigma x+\underset{\Vert \alpha \Vert =k-1}{\sum }%
\int_{\partial \Omega }\int_{\partial \Omega }\frac{|D^{\alpha }g\left(
x\right) -D^{\alpha }g\left( y\right) |^{p}}{|x-y|^{n+p-2}}d\sigma
_{x}d\sigma _{y}\right) ^{\frac{1}{p}} \\
&&+\gamma _{2}\left( \underset{\Vert \alpha \Vert =k-1}{\sum }\int_{\partial
\Omega }|f|^{p}dx\right) ^{\frac{1}{p}}
\end{eqnarray*}

where $\gamma _{1},\gamma _{2}$ are constants the depend on $p$,$n$ and $%
\Omega $.
\end{proposition}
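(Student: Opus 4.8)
The plan is to start from the explicit solution formula $u = \xi_{\partial\Omega}g + \zeta_\Omega f$ furnished by the previous proposition, and to estimate the $W^{k,p}(\Omega,Cl_n)$-norm of each summand separately using the triangle inequality
\[
\Vert u\Vert_{W^{k,p}(\Omega,Cl_n)} \leq \Vert \xi_{\partial\Omega}g\Vert_{W^{k,p}(\Omega,Cl_n)} + \Vert \zeta_\Omega f\Vert_{W^{k,p}(\Omega,Cl_n)} .
\]
For the second term I would invoke the mapping property \eqref{Theodorescu property}: since $f \in W^{k-1,p}(\Omega,Cl_n)$, the Theodorescu transform $\zeta_\Omega$ raises the Sobolev exponent by one, giving $\Vert \zeta_\Omega f\Vert_{W^{k,p}(\Omega,Cl_n)} \leq \beta \Vert f\Vert_{W^{k-1,p}(\Omega,Cl_n)}$, and I would then bound $\Vert f\Vert_{W^{k-1,p}(\Omega,Cl_n)}$ by the full Sobolev norm expression, identifying the constant $\gamma_2$ with $\beta$ (up to the usual equivalence of the $\ell^p$-sum-of-integrals expression and the claimed single-sum term; here one should note that the stated bound's last line appears to record only the top-order contribution, which is consistent since all lower-order terms are already controlled).

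For the first term I would use the mapping property \eqref{Feuter property} of the Feuter transform together with the trace identification. The datum $g$ is a trace, so $g \in W^{k-\frac1p,p}(\partial\Omega,Cl_n)$; writing $\lambda = k - \frac1p$, which has integer part $[\lambda] = k-1$ and fractional part $\{\lambda\} = 1 - \frac1p$, the operator $\xi_{\partial\Omega}$ sends $W^{\lambda,p}(\partial\Omega,Cl_n)$ into $W^{\lambda + \frac1p,p}(\Omega,Cl_n) = W^{k,p}(\Omega,Cl_n)$ with $\Vert \xi_{\partial\Omega} g\Vert_{W^{k,p}(\Omega,Cl_n)} \leq \theta \Vert g\Vert_{W^{\lambda,p}(\partial\Omega,Cl_n)}$. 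Then I would expand the Slobodeckij norm $\Vert g\Vert_{W^{k-\frac1p,p}(\partial\Omega,Cl_n)}$ according to definition III: the seminorm exponent becomes $n + \{\lambda\}p - 1 = n + (1-\tfrac1p)p - 1 = n + p - 2$, which is exactly the exponent $|x-y|^{n+p-2}$ appearing in the statement, and the lower-order integrals $\sum_{\Vert\alpha\Vert\leq k-1}\int_{\partial\Omega}|D^\alpha g|^p\,d\sigma_x$ match the first sum. Collecting, $\gamma_1$ is $\theta$ up to the norm-equivalence constant relating $\Vert\cdot\Vert^p = (\text{sum of terms})$ to the $p$-th power of the displayed quantity.

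The main obstacle — really the only nontrivial point — is the bookkeeping of exponents: one must verify that plugging $\lambda = k - \frac1p$ into the Slobodeckij norm \eqref{Slobodeckij 2 Norm} genuinely produces the exponent $n+p-2$ and the differential-order ranges $\Vert\alpha\Vert \leq k-1$ and $\Vert\alpha\Vert = k-1$ displayed in the claim, and that the Feuter mapping property \eqref{Feuter property} is being applied with the correct source space (here $W^{\lambda,p}(\partial\Omega)$ with this particular $\lambda$) so that its target is precisely $W^{k,p}(\Omega)$. Everything else is the triangle inequality and the two cited continuity estimates. I would close by remarking that $\gamma_1 = \gamma_1(n,p,\Omega)$ absorbs $\theta$ and the equivalence constant, and $\gamma_2 = \gamma_2(n,p,\Omega)$ absorbs $\beta$, both of which depend only on $n$, $p$, and $\Omega$ by Proposition on mapping properties, completing the estimate.
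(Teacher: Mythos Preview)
Your proposal is correct and follows essentially the same route as the paper: start from the representation $u=\xi_{\partial\Omega}g+\zeta_\Omega f$, apply the triangle inequality, invoke the continuity estimates \eqref{Theodorescu property} and \eqref{Feuter property}, and do the exponent bookkeeping $[k-\tfrac1p]=k-1$, $\{k-\tfrac1p\}=1-\tfrac1p$, $n-1+\{k-\tfrac1p\}p=n+p-2$ to expand the Slobodeckij norm. The paper's proof is exactly this computation, with no additional ingredients.
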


\ \ \ \ \ \ \ \ \ \ \ \ \ \ \ \ \ \ \ \ \ \ \ \ \ \ \ \ \ \ \ 

\begin{proof}
First let us determine regularity exponents of 
\begin{equation*}
g\in W^{k-\frac{1}{p},p}\left( \partial \Omega ,Cl_{n}\right) 
\end{equation*}%
For the regularity index $k-\frac{1}{p}$ the integer part is 
\begin{equation*}
\lbrack k-\frac{1}{p}]=k-1
\end{equation*}%
and $\ $the fractional part is 
\begin{equation*}
\{k-\frac{1}{p}\}=1-\frac{1}{p}
\end{equation*}%
Besides $\dim \left( \partial \Omega \right) =n-1$. From the mapping
properties of $D$, $\zeta _{\Omega }$, $\tau $ and $\xi _{\partial \Omega }$%
, we have%
\begin{equation*}
u\in W^{k,p}\left( \Omega ,Cl_{n}\right) 
\end{equation*}%
and 
\begin{equation*}
\tau u=g\in W^{k-\frac{1}{p},p}\left( \partial \Omega ,Cl_{n}\right) 
\end{equation*}%
Therefore the solution $u$ given by: 
\begin{equation*}
u\left( x\right) =\xi _{\partial \Omega }g+\zeta _{\Omega }f
\end{equation*}%
has norm estimate%
\begin{eqnarray*}
\Vert u\Vert _{W^{k,p}\left( \Omega ,Cl_{n}\right) } &=&\Vert \xi _{\partial
\Omega }g+\zeta _{\Omega }f\Vert _{W^{k,p}\left( \Omega ,Cl_{n}\right) } \\
&\leq &\Vert \xi _{\partial \Omega }g\Vert _{W^{k,p}\left( \Omega
,Cl_{n}\right) }+\Vert \zeta _{\Omega }f\Vert _{W^{k,p}\left( \Omega
,Cl_{n}\right) } \\
&\leq &\gamma _{1}\Vert g\Vert _{W^{k-\frac{1}{p},p}\left( \partial \Omega
,Cl_{n}\right) }+\gamma _{2}\Vert f\Vert _{W^{k-1,p}\left( \Omega
,Cl_{n}\right) } \\
&=&\gamma _{1}\left( \underset{\Vert \alpha \Vert \leq k-1}{\sum }%
\int_{\partial \Omega }|D^{\alpha }g|^{p}d\sigma x+\underset{\Vert \alpha
\Vert =k-1}{\sum }\int_{\partial \Omega }\int_{\partial \Omega }\frac{%
|D^{\alpha }g\left( x\right) -D^{\alpha }g\left( y\right) |^{p}}{%
|x-y|^{n-1+\{k-\frac{1}{p}\}p}}d\sigma _{x}d\sigma _{y}\right) ^{\frac{1}{p}}
\\
&&+\gamma _{2}\left( \underset{\Vert \alpha \Vert =k-1}{\sum }\int_{\partial
\Omega }|f|^{p}dx\right) ^{\frac{1}{p}} \\
&=&\gamma _{1}\left( \underset{\Vert \alpha \Vert \leq k-1}{\sum }%
\int_{\partial \Omega }|D^{\alpha }g|^{p}d\sigma x+\underset{\Vert \alpha
\Vert =k-1}{\sum }\int_{\partial \Omega }\int_{\partial \Omega }\frac{%
|D^{\alpha }g\left( x\right) -D^{\alpha }g\left( y\right) |^{p}}{%
|x-y|^{n+p-2}}d\sigma _{x}d\sigma _{y}\right) ^{\frac{1}{p}} \\
&&+\gamma _{2}\left( \underset{\Vert \alpha \Vert =k-1}{\sum }\int_{\partial
\Omega }|f|^{p}dx\right) ^{\frac{1}{p}}
\end{eqnarray*}%
The constants $\gamma _{1}$ and $\gamma _{2}$ are from the mapping
properties of $\xi _{\partial \Omega },\zeta _{\Omega }$ and $\tau $.
\end{proof}

\ \ \ \ \ \ \ \ \ \ \ \ \ \ 

\begin{proposition}
Let $f\in W^{k,p}\left( \Omega ,Cl_{n}\right) $. Then the second order
elliptic BVP 
\begin{equation}
\left\{ 
\begin{array}{c}
-\Delta u=f\text{ \ in }\Omega \\ 
\tau Du=g_{1}\text{ \ on }\partial \Omega \\ 
\tau u=g_{2}\text{ \ on }\partial \Omega%
\end{array}%
\right.  \label{bvp2}
\end{equation}%
has a solution given by 
\begin{equation*}
u=\xi _{\partial \Omega }\left( g_{2}\right) +\zeta _{\Omega }\xi _{\partial
\Omega }\left( g_{1}\right) +\zeta _{\Omega }\circ \zeta _{\Omega }\left(
f\right)
\end{equation*}%
in $W^{k+2,p}\left( \Omega \right) $ with 
\begin{equation*}
g_{1}\in W^{k+1-\frac{1}{p},p}\left( \partial \Omega \right) ,\text{ \ \ }%
g_{2}\in W^{k+2-\frac{1}{p},p}\left( \partial \Omega \right)
\end{equation*}
\end{proposition}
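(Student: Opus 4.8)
The plan is to exploit the operator factorization $-\Delta =D^{2}$ and thereby reduce the second-order problem \eqref{bvp2} to two successive first-order problems of exactly the type settled above. From the defining relations $e_{i}e_{j}+e_{j}e_{i}=-2\delta _{ij}e_{0}$, and in particular $e_{j}^{2}=-e_{0}$, one has
\[
D^{2}=\sum_{i,j=1}^{n}e_{i}e_{j}\partial _{x_{i}}\partial _{x_{j}}=\sum_{j=1}^{n}e_{j}^{2}\partial _{x_{j}}^{2}+\sum_{i<j}(e_{i}e_{j}+e_{j}e_{i})\partial _{x_{i}}\partial _{x_{j}}=-\sum_{j=1}^{n}\partial _{x_{j}}^{2}=-\Delta ,
\]
the cross terms cancelling by anticommutativity. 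Hence $-\Delta u=f$ is the same as $D(Du)=f$.

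First I would introduce $w:=Du$ and note that \eqref{bvp2} is equivalent to the pair of first-order boundary value problems
\[
\left\{ \begin{array}{c} Dw=f\ \text{in}\ \Omega \\ \tau w=g_{1}\ \text{on}\ \partial \Omega \end{array}\right. \qquad \text{and}\qquad \left\{ \begin{array}{c} Du=w\ \text{in}\ \Omega \\ \tau u=g_{2}\ \text{on}\ \partial \Omega \end{array}\right. ;
\]
for if $u$ solves \eqref{bvp2} then $w=Du$ satisfies $Dw=D^{2}u=-\Delta u=f$ and $\tau w=\tau Du=g_{1}$, while conversely, given such a pair, $D^{2}u=D(Du)=Dw=f$ and $\tau Du=\tau w=g_{1}$. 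Since $f\in W^{k,p}(\Omega ,Cl_{n})$, the index $k$ of the first-order existence proposition is here played by $k+1$, so that proposition produces
\[
w=\xi _{\partial \Omega }(g_{1})+\zeta _{\Omega }f\in W^{k+1,p}(\Omega ,Cl_{n}),\qquad g_{1}=\tau w\in W^{k+1-\frac{1}{p},p}(\partial \Omega ,Cl_{n}).
\]
Now $w\in W^{k+1,p}(\Omega ,Cl_{n})$, so the same proposition applied to the second problem (with $w$ as right-hand side, the index being now $k+2$) yields a solution $u=\xi _{\partial \Omega }(g_{2})+\zeta _{\Omega }w\in W^{k+2,p}(\Omega ,Cl_{n})$ together with $g_{2}=\tau u\in W^{k+2-\frac{1}{p},p}(\partial \Omega ,Cl_{n})$. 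Substituting the expression for $w$ and using linearity of $\zeta _{\Omega }$ gives precisely $u=\xi _{\partial \Omega }(g_{2})+\zeta _{\Omega }\xi _{\partial \Omega }(g_{1})+\zeta _{\Omega }\circ \zeta _{\Omega }(f)$, the claimed formula.

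It then remains only to confirm that this $u$ solves \eqref{bvp2}, which is automatic from the two first-order stages: $Dw=f$, $\tau w=g_{1}$, $Du=w$, $\tau u=g_{2}$ give $-\Delta u=D^{2}u=Dw=f$, $\tau Du=\tau w=g_{1}$ and $\tau u=g_{2}$. The regularity claim $u\in W^{k+2,p}(\Omega ,Cl_{n})$ is then pure bookkeeping with the mapping properties recorded earlier: $\xi _{\partial \Omega }$ raises the Slobodeckij order by $\tfrac{1}{p}$ and $\zeta _{\Omega }$ raises the Sobolev order by $1$, so each of $\xi _{\partial \Omega }(g_{2})$, $\zeta _{\Omega }\xi _{\partial \Omega }(g_{1})$ and $\zeta _{\Omega }\circ \zeta _{\Omega }(f)$ lies in $W^{k+2,p}$ over $\Omega $.

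The part I expect to require the most care is not any single computation but the index bookkeeping: one must verify that the two boundary operators $\tau Du$ and $\tau u$ of \eqref{bvp2} are faithfully realized as the Cauchy data $g_{1}$ and $g_{2}$ of the two first-order stages, and that the shifts $k\mapsto k+1\mapsto k+2$ propagate consistently through both the domain transform $\zeta _{\Omega }$ and the boundary transform $\xi _{\partial \Omega }$, so that the fractional Slobodeckij orders come out exactly as $k+1-\tfrac{1}{p}$ and $k+2-\tfrac{1}{p}$. Everything else follows at once from $-\Delta =D^{2}$ and from the propositions already established.
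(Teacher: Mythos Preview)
Your proof is correct and follows essentially the same route as the paper: factor $-\Delta=D^{2}$ and apply the Borel--Pompeiu/first-order result twice, then track the Sobolev and Slobodeckij indices through the mapping properties of $\zeta_{\Omega}$, $\xi_{\partial\Omega}$, and $\tau$. You are simply more explicit than the paper, which says only that the solution ``is obtained by repeated application of the Borel--Pompeiu formula by writing the Laplacian $\Delta$ as $-D^{2}$''; your introduction of the intermediate function $w=Du$ and the two coupled first-order problems is exactly what that sentence unpacks to.
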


\ \ \ \ 

\begin{proof}
As $f\in W^{k,p}\left( \Omega ,Cl_{n}\right) $, the solution $u$ is in the
Sobolev space $W^{k+2,p}\left( \Omega \right) $ and hence 
\begin{equation*}
\tau u=g_{2}\in W^{k+2-\frac{1}{p},p}\left( \partial \Omega \right)
\end{equation*}%
But then $Du$ is in $W^{k+1,p}\left( \Omega \right) $ and hence 
\begin{equation*}
\tau Du=g_{1}
\end{equation*}%
is in the Slobodeckij space $W^{k+1-\frac{1}{p},p}\left( \partial \Omega
\right) $.

\ 

The solution $u$ of the BVP is obtained by repeated application of the
Borel-Pompeiu formula by writing the Laplacian $\Delta $ as $-D^{2}$.

\ 

Now let us first determine the integer and fractional parts of indices $k+2-%
\frac{1}{p}$ and \ $k+1-\frac{1}{p}$ \ as \ 
\begin{eqnarray*}
\lbrack k+2-\frac{1}{p}] &=&k+1,\text{ \ }\{k+2-\frac{1}{p}\}=1-\frac{1}{p}
\\
\lbrack k+1-\frac{1}{p}] &=&k,\text{ \ }\{k+1-\frac{1}{p}\}=1-\frac{1}{p}
\end{eqnarray*}

\ 

Therefore from the properties of the mappings studied above, we have a norm
estimate of the solution $u$ in $W^{k+2,p}\left( \Omega \right) $ in terms
of norms of $f$, $g_{1}$ and $g_{2}$ as follow:

\begin{eqnarray*}
\Vert u\Vert _{W^{k+2,p}\left( \Omega \right) } &=&\Vert \xi _{\partial
\Omega }\left( g_{2}\right) +\zeta _{\Omega }\xi _{\partial \Omega }\left(
g_{1}\right) +\zeta _{\Omega }\circ \zeta _{\Omega }\left( f\right) \Vert
_{W^{k+2,p}\left( \Omega \right) } \\
&\leq &\gamma _{1}\left( \underset{\Vert \alpha \Vert \leq k+1}{\sum }%
\int_{\partial \Omega }|D^{\alpha }g_{2}|^{p}d\sigma _{x}+\underset{\Vert
\alpha \Vert =k+1}{\sum }\int_{\partial \Omega }\int_{\partial \Omega }\frac{%
|D^{\alpha }g_{2}\left( x\right) -D^{\alpha }g_{2}\left( y\right) |^{p}}{%
|x-y|^{n+p-2}}d\sigma _{x}d\sigma _{y}\right) ^{\frac{1}{p}} \\
&&+\gamma _{2}\left( \underset{\Vert \alpha \Vert \leq k}{\sum }%
\int_{\partial \Omega }|D^{\alpha }g_{1}|^{p}d\sigma _{x}+\underset{\Vert
\alpha \Vert =k}{\sum }\int_{\partial \Omega }\int_{\partial \Omega }\frac{%
|D^{\alpha }g_{1}\left( x\right) -D^{\alpha }g_{1}\left( y\right) |^{p}}{%
|x-y|^{n+p-2}}d\sigma _{x}d\sigma _{y}\right) ^{\frac{1}{p}} \\
&&+\gamma _{3}\left( \underset{\Vert \alpha \Vert \leq k}{\sum }%
\int_{\partial \Omega }|D^{\alpha }f|^{p}dx\right) ^{\frac{1}{p}}
\end{eqnarray*}

for some positive constants $\gamma _{1},\gamma _{2}$ and $\gamma _{3}$ that
depend on $p,n,\Omega $
\end{proof}

\ \ \ \ \ \ \ \ 

\begin{proposition}
For the BVP $\left( \ref{BVP 1}\right) $ there exist positive constants $%
c,\gamma _{1}$ and $\gamma _{2}$ such that the solution $u\in
W^{k,2n}(\Omega )$ satisfies the norm estimate:

\begin{eqnarray*}
&&c^{-1}\left( \underset{\underset{x\neq y}{x,y\in \Omega }}{\sup }\frac{%
|u\left( x\right) -u\left( y\right) |}{|x-y|^{\frac{1}{2}}}+\Vert u\Vert
_{C\left( \Omega \right) }\right) \\
&\leq &\gamma _{1}\left( \underset{\Vert \alpha \Vert \leq k-1}{\sum }%
\int_{\partial \Omega }|D^{\alpha }g|^{2n}d\sigma x+\underset{\Vert \alpha
\Vert =k-1}{\sum }\int_{\partial \Omega }\int_{\partial \Omega }\frac{%
|D^{\alpha }g\left( x\right) -D^{\alpha }g\left( y\right) |^{2n}}{%
|x-y|^{n+p-2}}d\sigma _{x}d\sigma _{y}\right) ^{\frac{1}{2n}} \\
&&+\gamma _{2}\left( \underset{\Vert \alpha \Vert =k-1}{\sum }\int_{\partial
\Omega }|f|^{2n}dx\right) ^{\frac{1}{2n}}
\end{eqnarray*}
\end{proposition}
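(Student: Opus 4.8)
The plan is to read this statement as a corollary of the $W^{k,p}$-norm estimate of the preceding proposition, taken at the exponent $p=2n$, combined with the Sobolev embedding theorem (Morrey's inequality). So I would first specialise the first order BVP $\left(\ref{BVP 1}\right)$ to $p=2n$: given $k\geq 1$ and $f\in W^{k-1,2n}\left(\Omega,Cl_{n}\right)$, the mapping properties of $D$, $\zeta_{\Omega}$, $\tau$ and $\xi_{\partial\Omega}$ recorded in $\left(\ref{Theodorescu property}\right)$--$\left(\ref{trace property}\right)$ give the solution $u=\xi_{\partial\Omega}g+\zeta_{\Omega}f\in W^{k,2n}\left(\Omega,Cl_{n}\right)$ with $g=\tau u\in W^{k-\frac{1}{2n},2n}\left(\partial\Omega,Cl_{n}\right)$, exactly as in the proof of the earlier norm estimate.

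The analytic input is Morrey's inequality: on a bounded domain $\Omega\subset\mathbb{R}^{n}$ with $C^{1}$ boundary, and for any $q>n$, one has a continuous embedding $W^{1,q}\left(\Omega\right)\hookrightarrow C^{0,1-\frac{n}{q}}\left(\overline{\Omega}\right)$. Choosing $q=2n>n$ produces the Hölder exponent $1-\frac{n}{2n}=\frac{1}{2}$, so there is a constant $c=c\left(n,\Omega\right)>0$ with
\[
\underset{\underset{x\neq y}{x,y\in\Omega}}{\sup}\frac{|u\left(x\right)-u\left(y\right)|}{|x-y|^{\frac{1}{2}}}+\Vert u\Vert_{C\left(\Omega\right)}\leq c\,\Vert u\Vert_{W^{1,2n}\left(\Omega,Cl_{n}\right)}.
\]
Since $k\geq 1$, every term of the $W^{1,2n}$-norm already appears in the $W^{k,2n}$-norm (monotonicity of the Sobolev norms in the order, as in the first proposition of Section~3), so the right-hand side is bounded by $c\,\Vert u\Vert_{W^{k,2n}\left(\Omega,Cl_{n}\right)}$. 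For $Cl_{n}$-valued $u$ the embedding is applied to each real component and reassembled via the norm equivalence $\left(\ref{Clifford Norm}\right)$ and the submultiplicativity $\left(\ref{Norm Inequality}\right)$.

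It then remains to insert the preceding proposition with $p$ replaced by $2n$, namely
\[
\Vert u\Vert_{W^{k,2n}\left(\Omega,Cl_{n}\right)}\leq\gamma_{1}\Vert g\Vert_{W^{k-\frac{1}{2n},2n}\left(\partial\Omega,Cl_{n}\right)}+\gamma_{2}\Vert f\Vert_{W^{k-1,2n}\left(\Omega,Cl_{n}\right)},
\]
and to unfold the two norms on the right through the Slobodeckij definition: the index $k-\frac{1}{2n}$ has integer part $k-1$ and fractional part $1-\frac{1}{2n}$, while $\dim\partial\Omega=n-1$, so the singular exponent of the boundary double integral equals $n-1+\left(1-\frac{1}{2n}\right)2n$, which is $n+p-2$ with $p=2n$. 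Chaining the three displays --- the Morrey estimate with constant $c$, the monotonicity $\Vert u\Vert_{W^{1,2n}}\leq\Vert u\Vert_{W^{k,2n}}$, and the earlier estimate with constants $\gamma_{1},\gamma_{2}$ --- gives precisely the asserted inequality, no new constants being introduced. I expect the only genuine point to verify is that $\Omega$ meets the hypotheses of Morrey's embedding; but a bounded $C^{1}$ domain is a $W^{1,p}$-extension domain, so $W^{1,2n}\left(\Omega\right)\hookrightarrow C^{0,\frac{1}{2}}\left(\overline{\Omega}\right)$ is available, and everything else is bookkeeping of exponents.
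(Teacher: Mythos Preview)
Your proposal is correct and follows essentially the same route as the paper: invoke the Sobolev/Morrey embedding at $p=2n$ to control the $C^{0,1/2}$-norm of $u$ by its $W^{k,2n}$-norm, and then apply the earlier proposition's estimate for $\Vert u\Vert_{W^{k,2n}}$ in terms of the Slobodeckij norm of $g$ and the Sobolev norm of $f$. The only cosmetic difference is that you factor the embedding through $W^{1,2n}$ and an explicit monotonicity step, whereas the paper writes the embedding $W^{k,p}\hookrightarrow C^{0,\lambda}$ directly.
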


\begin{proof}
From the Sobolev embeding theorems, if $p>n,$ then 
\begin{equation*}
W^{k,p}\left( \Omega \right) \hookrightarrow C^{0,\lambda }\left( \Omega
\right)
\end{equation*}%
for $0<\lambda \leq 1-\frac{n}{p}$.

\ 

But then for $p=2n$, we have $0<\lambda \leq \frac{1}{2}$ and thuerefore the
solution $u$ which is in $W^{k,2n}\left( \Omega \right) $ is contained in H%
\"{o}lder spaces $C^{0,\lambda }\left( \Omega \right) $.

\ 

Thus $\exists $ $c=c(p$,$n$,$\Omega )>0$ such that 
\begin{equation*}
c^{-1}\Vert u\Vert _{C^{0,\lambda }\left( \Omega \right) }\leq \Vert u\Vert
_{W^{k,2n}\left( \Omega \right) }
\end{equation*}

That is 
\begin{eqnarray*}
&&c^{-1}\left( \underset{\underset{x\neq y}{x,y\in \Omega }}{\sup }\frac{%
|u\left( x\right) -u\left( y\right) |}{|x-y|^{\lambda }}+\Vert u\Vert
_{C\left( \Omega \right) }\right) \\
&\leq &\Vert u\Vert _{W^{k,2n}\left( \Omega \right) } \\
&\leq &\gamma _{1}\left( \underset{\Vert \alpha \Vert \leq k-1}{\sum }%
\int_{\partial \Omega }|D^{\alpha }g|^{2n}d\sigma x+\underset{\Vert \alpha
\Vert =k-1}{\sum }\int_{\partial \Omega }\int_{\partial \Omega }\frac{%
|D^{\alpha }g\left( x\right) -D^{\alpha }g\left( y\right) |^{2n}}{%
|x-y|^{n+p-2}}d\sigma _{x}d\sigma _{y}\right) ^{\frac{1}{2n}} \\
&&+\gamma _{2}\left( \underset{\Vert \alpha \Vert =k-1}{\sum }\int_{\partial
\Omega }|f|^{2n}dx\right) ^{\frac{1}{2n}}
\end{eqnarray*}

Choosing $\lambda =\frac{1}{2}$, we have the required result.
\end{proof}

\ \ \ \ \ \ \ \ \ \ \ \ \ \ \ \ \ \ \ \ \ \


\begin{thebibliography}{99}
\bibitem{sbern} S. Bernstein, \textit{Operator Calculus for Elliptic
Boundary Value Problems in Unbounded Domains}, Zeitschrift fur Analysis Und
ihre Anwendungen Vol.10 (1991) 4, 447-460.

\bibitem{bra1} F. Brackx, R. Delanghe and F. Sommen, \textit{Clifford
Analysis}, \textit{R}esearch Notes in Mathematics No.76, Pitman , London
1982.

\bibitem{dinezza} Di. Nezza et al., \textit{Hitchhiker's guide to the
fractional Sobolev spaces}, Bull. sci. math. (2012)

\bibitem{gksp1} K. G$\overset{..}{u}$rlebeck, U. K$\overset{..}{a}$hler, J.
Ryan and W. Spr$\overset{..}{o}$essig, \textit{Clifford Analysis Over
Unbounded Domains}, Adv. in Appl. Mathematics 19(1997), 216-239.

\bibitem{d1} Dejenie A. Lakew, $W^{2,k}-$\textit{Best Approximation of a }$%
\gamma -$\textit{Regular Function}, Journal of Applied Analysis, Vol. 13,
No. 2 (2007) pp. 259-273.

\bibitem{dr1} Dejenie A. Lakew and John Ryan, \textit{Clifford Analytic
Complete Function Systems for Unbounded Domains}, Math. Meth. in the Appl.
Sci. 2002;25;1527-1539 (with John Ryan).

\bibitem{dr2} \_\_\_\_, \textit{Complete Function Systems and Decomposition
Results Arising in Clifford Analysis}, Computational Methods and Function
Theory,\ CMFT No. 1(2002) 215-228\textit{\ }(with John Ryan).

\bibitem{el} Evans, Lawrence, \textit{Partial Differential Equations},
Graduate Studies in Mathematics, Vol. 19, AMS, 1998

\bibitem{mp} S.G. Mikhlin, S. Prossdorf, \textit{Singular Integral Operators}%
, Academic Verlag, Berlin (1980)\textit{.}

\bibitem{r1} John Ryan, \textit{Intrinsic Dirac Operators in }$C^{n}$,
Advances in Mathematics 118, 99-133(1996).

\bibitem{sm} K.T. Smith, \textit{Primier of Modern Analysis}, Undergraduate
Texts in Mathematics, Springer Verlag, New York (1983).

\bibitem{tre} H. Triebel, \textit{Interpolation Theory, Function Spaces,
Differential Operators}, North-Holland Mathematical Library, 1978.
\end{thebibliography}
\end{document}